\newtheorem{theorem}{Theorem}
\newtheorem{definition}[theorem]{Definition}
\newtheorem{example}[theorem]{Example}
\newtheorem{proposition}[theorem]{Proposition}
\newtheorem{remark}[theorem]{Remark}
\newenvironment{proof}[1][Proof]{\noindent\textbf{#1.} }{\ \rule{0.5em}{0.5em}}
\begin{document}

\title{Dual jet geometrical objects of momenta in the time-dependent
Hamilton geometry}
\author{Mircea Neagu and Alexandru Oan\u{a}}
\date{}
\maketitle

\begin{abstract}
The aim of this paper is to obtain on the dual 1-jet space $J^{1\ast }(%
\mathbb{R},M)$ the main geometrical objects used in the dual jet geo\-metry
of time-dependent Hamiltonians. We talk about distinguished (d-) tensors,
time-dependent semisprays, nonlinear connections and their mathematical
connections.
\end{abstract}

\textbf{Mathematics Subject Classification (2010):} 53B40, 53C60, 53C07.

\textbf{Key words and phrases:} dual 1-jet space, d-tensors, time-dependent
semisprays of momenta, nonlinear connections, adapted bases.

\section{Introduction}

According to Olver's opinion \cite{Olv1}, we recall that the 1-jet spaces
and their duals are the fundamental ambient mathematical spaces used in the
study of classical and quantum field theories in their Lagrangian and
Hamiltonian approaches (see also \cite{Ata-Nea-Oana}). For this reason, the
studies of Miron \cite{Miron-Hamilton} and Atanasiu (\cite{At}, \cite%
{Atan-Klepp}) led to the development of the \textit{Hamilton geometry of
cotangent bundles} exposed by Miron, Hrimiuc, Shimada and Sab\u{a}u in the
monograph \cite{Miron-Hr-Shim-Sab}. We emphasize that, via the Legendre
duality of the Hamilton spaces with the Lagrange spaces, the preceding
authors have shown in \cite{Miron-Hr-Shim-Sab} that the theory of Hamilton
spaces has the same symmetry as the Lagrange geometry, giving thus a
geometrical framework for the Hamiltonian theory of Analytical Mechanics.

According to this physical and geometrical context, suggested by the
cotangent bundle framework of the Miron et al., this paper is devoted to
exposing a particular case of the \textit{time-dependent covariant Hamilton
geometry} studied in \cite{Ata-Nea-Oana} \textit{on dual 1-jet spaces} (in
the sense of d-tensors, time-dependent semisprays of momenta and nonlinear
connections), which is a natural dual jet extension of the Hamilton geometry
on the cotangent bundle from \cite{Miron-Hr-Shim-Sab}.

\section{The dual 1-jet space}

In our geometrical study we start with a smooth real manifold $M^{n}$ of
dimension $n$, whose local coordinates are $(x^{i})_{i=\overline{1,n}}$. Let
us also consider the dual $1$-jet vector bundle (i.e., \textit{the
time-dependent phase space of momenta}) 
\begin{equation*}
J^{1\ast }(\mathbb{R},M)\equiv \mathbb{R}\times T^{\ast }M\rightarrow 
\mathbb{R}\times M, 
\end{equation*}
whose local coordinates are denoted by $(t,x^{i},p_{i}^{1})$, where the
coordinates $p_{i}^{1}$ have the physical meaning of \textit{momenta}.

The coordinate transformations $(t,x^{i},p_{i}^{1})\longleftrightarrow (%
\tilde{t},\tilde{x}^{i},\tilde{p}_{i}^{1})$ induced from $\mathbb{R}\times M 
$ on the dual $1$-jet space $J^{1\ast }(\mathbb{R},M)$ are given by 
\begin{equation}
\left\{ 
\begin{array}{l}
\tilde{t}=\tilde{t}\left( t\right) \medskip \\ 
\tilde{x}^{i}=\tilde{x}^{i}\left( x^{j}\right) \medskip \\ 
\tilde{p}_{i}^{1}=\dfrac{\partial x^{j}}{\partial \tilde{x}^{i}}\dfrac{d%
\tilde{t}}{dt}p_{j}^{1},%
\end{array}%
\right.  \label{schp}
\end{equation}%
where $d\tilde{t}/dt\neq 0$ and $\det (\partial \tilde{x}^{i}/\partial
x^{j})\neq 0$. It follows that, in our dual jet geo\-me\-tri\-cal approach,
we use a \textit{"relativistic"} time $t$.

By comparison, in the cotangent Hamiltonian approach from \cite%
{Miron-Hr-Shim-Sab}, the authors use the trivial bundle $\mathbb{R}\times
T^{\ast }M\rightarrow T^{\ast }M$, whose coordinates are $\left(
t,x^{i},p_{i}\right) $. In this context, the changes of coordinates are
given by 
\begin{equation*}
\left\{ 
\begin{array}{l}
\tilde{t}=t\medskip \\ 
\tilde{x}^{i}=\tilde{x}^{i}\left( x^{j}\right) \medskip \\ 
\tilde{p}_{i}=\dfrac{\partial x^{j}}{\partial \tilde{x}^{i}}p_{j},%
\end{array}%
\right.
\end{equation*}%
emphasizing the \textit{absolute} character of the time $t$. In such a
context, a time dependent Hamiltonian is a real valued function $H$ on $%
\mathbb{R}\times T^{\ast }M$, which is also called \textit{rheonomic}, or 
\textit{non-autonomous} Hamiltonian. A geometrization of these Hamiltonians
was realized by Miron, Atanasiu and their co-workers in the works \cite{At}, 
\cite{Atan-Klepp}, \cite{Miron-Hamilton} and \cite{Miron-Hr-Shim-Sab}.

Now, doing a transformation of coordinates (\ref{schp}) on $J^{1\ast }(%
\mathbb{R},M)$, we obtain the following results:

\begin{proposition}
The elements of the local natural basis of vector fields%
\begin{equation*}
{\left\{ {\dfrac{\partial }{\partial t}},{\dfrac{\partial }{\partial x^{i}}},%
{\dfrac{\partial }{\partial p_{i}^{1}}}\right\} \subset \mathcal{X}(J^{1\ast
}(\mathbb{R},M))}
\end{equation*}%
transform by the rules%
\begin{equation}
\begin{array}{l}
{{\dfrac{\partial }{\partial t}}={\dfrac{d\tilde{t}}{dt}}{\dfrac{\partial }{%
\partial \tilde{t}}}+{\dfrac{\partial \tilde{p}_{j}^{1}}{\partial t}}{\dfrac{%
\partial }{\partial \tilde{p}_{j}^{1}}}},\medskip \\ 
{{\dfrac{\partial }{\partial x^{i}}}={\dfrac{\partial \tilde{x}^{j}}{%
\partial x^{i}}}{\dfrac{\partial }{\partial \tilde{x}^{j}}}+{\dfrac{\partial 
\tilde{p}_{j}^{1}}{\partial x^{i}}}{\dfrac{\partial }{\partial \tilde{p}%
_{j}^{1}}}},\medskip \\ 
{{\dfrac{\partial }{\partial p_{i}^{1}}}={\dfrac{\partial x^{i}}{\partial 
\tilde{x}^{j}}}{\dfrac{d\tilde{t}}{dt}}{\dfrac{\partial }{\partial \tilde{p}%
_{j}^{1}}};}%
\end{array}
\label{schch}
\end{equation}
\end{proposition}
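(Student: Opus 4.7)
The plan is to apply the chain rule of partial differentiation to the coordinate change (\ref{schp}), exploiting the particular functional dependencies built into that transformation. For any local coordinate $q$ on $J^{1\ast}(\mathbb{R},M)$, the general chain-rule identity reads
\[
\frac{\partial }{\partial q}=\frac{\partial \tilde{t}}{\partial q}\frac{\partial }{\partial \tilde{t}}+\frac{\partial \tilde{x}^{j}}{\partial q}\frac{\partial }{\partial \tilde{x}^{j}}+\frac{\partial \tilde{p}_{j}^{1}}{\partial q}\frac{\partial }{\partial \tilde{p}_{j}^{1}},
\]
and the three announced formulas are obtained by setting $q=t$, $q=x^{i}$, and $q=p_{i}^{1}$ in turn, while tracking which of the three summands vanishes due to (\ref{schp}).

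For $q=t$, the relation $\tilde{x}^{j}=\tilde{x}^{j}(x^{\ell})$ kills the $\partial /\partial \tilde{x}^{j}$ contribution, while $\partial \tilde{t}/\partial t=d\tilde{t}/dt$ gives the first summand of the stated formula and the momentum term survives as written. For $q=x^{i}$, the fact that $\tilde{t}=\tilde{t}(t)$ eliminates the $\partial /\partial \tilde{t}$ piece, leaving the $\tilde{x}^{j}$- and $\tilde{p}_{j}^{1}$-terms. For $q=p_{i}^{1}$, both $\tilde{t}$ and $\tilde{x}^{j}$ are independent of the momenta, so only the third summand survives; using the explicit linearity of $\tilde{p}_{j}^{1}$ in $p_{\ell}^{1}$ from (\ref{schp}), one computes
\[
\frac{\partial \tilde{p}_{j}^{1}}{\partial p_{i}^{1}}=\frac{\partial x^{\ell}}{\partial \tilde{x}^{j}}\frac{d\tilde{t}}{dt}\,\delta _{\ell}^{i}=\frac{\partial x^{i}}{\partial \tilde{x}^{j}}\frac{d\tilde{t}}{dt},
\]
which yields the third transformation rule.

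There is no serious obstacle; the only subtle point is that the coefficients $\partial \tilde{p}_{j}^{1}/\partial t$ and $\partial \tilde{p}_{j}^{1}/\partial x^{i}$ appearing in the first two formulas are deliberately left unexpanded. Substituting the third line of (\ref{schp}) would express them in terms of second derivatives of the change of coordinates and of $p_{\ell}^{1}$, but in jet geometry such expressions are customarily kept in compact form, since the nonlinear connection introduced later will precisely absorb these non-tensorial remainders.
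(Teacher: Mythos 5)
Your proposal is correct and coincides with the paper's (implicit) argument: the proposition is obtained there by "doing a transformation of coordinates (\ref{schp})", i.e.\ exactly the chain-rule computation you carry out, with the observations that $\tilde{t}$ depends only on $t$, $\tilde{x}^{j}$ only on the $x^{\ell}$, and $\tilde{p}_{j}^{1}$ is linear in the momenta, giving $\partial \tilde{p}_{j}^{1}/\partial p_{i}^{1}=\dfrac{\partial x^{i}}{\partial \tilde{x}^{j}}\dfrac{d\tilde{t}}{dt}$. Leaving $\partial \tilde{p}_{j}^{1}/\partial t$ and $\partial \tilde{p}_{j}^{1}/\partial x^{i}$ unexpanded is also what the paper does in (\ref{schch}).
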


\begin{proposition}
The elements of the local natural basis of covector fields%
\begin{equation*}
\{dt,dx^{i},dp_{i}^{1}\}\subset \mathcal{X}^{\ast }({J^{1\ast }(\mathbb{R},M)%
})
\end{equation*}%
transform by the rules%
\begin{equation}
\begin{array}{l}
{dt={\dfrac{dt}{d\tilde{t}}}d\tilde{t}},\medskip \\ 
{dx^{i}={\dfrac{\partial x^{i}}{\partial \tilde{x}^{j}}}d\tilde{x}^{j}}%
,\medskip \\ 
{dp_{i}^{1}={\dfrac{\partial p_{i}^{1}}{\partial \tilde{t}}}d\tilde{t}+{%
\dfrac{\partial p_{i}^{1}}{\partial \tilde{x}^{j}}}d\tilde{x}^{j}+{\dfrac{%
\partial \tilde{x}^{j}}{\partial x^{i}}}{\dfrac{dt}{d\tilde{t}}}d\tilde{p}%
_{j}^{1}.}%
\end{array}
\label{schfh}
\end{equation}
\end{proposition}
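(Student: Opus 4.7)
The proof plan is to apply the total differential to each of the old coordinates $t$, $x^{i}$, $p_{i}^{1}$, viewed as functions of the new coordinates $(\tilde{t},\tilde{x}^{j},\tilde{p}_{j}^{1})$, and read off the coefficients. Concretely, for any smooth function $f$ on $J^{1\ast}(\mathbb{R},M)$ one has
\[
df=\frac{\partial f}{\partial \tilde{t}}\,d\tilde{t}+\frac{\partial f}{\partial \tilde{x}^{j}}\,d\tilde{x}^{j}+\frac{\partial f}{\partial \tilde{p}_{j}^{1}}\,d\tilde{p}_{j}^{1},
\]
so the three rules in (\ref{schfh}) will follow once I identify the relevant partial derivatives.

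First I would handle $dt$ and $dx^{i}$, which are immediate. From (\ref{schp}), the function $t$ depends only on $\tilde{t}$, hence $dt=(dt/d\tilde{t})\,d\tilde{t}$; similarly, inverting the diffeomorphism $\tilde{x}^{i}=\tilde{x}^{i}(x^{j})$ gives $x^{i}=x^{i}(\tilde{x}^{j})$, so $dx^{i}=(\partial x^{i}/\partial \tilde{x}^{j})\,d\tilde{x}^{j}$.

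The slightly richer computation is the one for $dp_{i}^{1}$. Here I would first invert the third line of (\ref{schp}): solving the linear-in-momenta relation yields
\[
p_{i}^{1}=\frac{\partial \tilde{x}^{j}}{\partial x^{i}}\,\frac{dt}{d\tilde{t}}\,\tilde{p}_{j}^{1},
\]
which exhibits $p_{i}^{1}$ as a function of all three groups of new coordinates (through $\tilde{t}$, $\tilde{x}^{j}$ and $\tilde{p}_{j}^{1}$). Applying the total differential formula above, the coefficient of $d\tilde{p}_{j}^{1}$ is precisely $(\partial \tilde{x}^{j}/\partial x^{i})(dt/d\tilde{t})$, matching (\ref{schfh}), while the coefficients of $d\tilde{t}$ and $d\tilde{x}^{j}$ are packaged by the authors as $\partial p_{i}^{1}/\partial \tilde{t}$ and $\partial p_{i}^{1}/\partial \tilde{x}^{j}$, so no further simplification is required.

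There is no real obstacle here: the only place that needs care is the inversion of the third formula in (\ref{schp}), but this is trivial because the transformation is linear in the momenta with invertible coefficient matrix (since $d\tilde{t}/dt\neq 0$ and $\det(\partial \tilde{x}^{i}/\partial x^{j})\neq 0$). A consistency check I would perform at the end is to pair the rules (\ref{schfh}) with the vector-field rules (\ref{schch}) via the duality $\langle dx^{A},\partial/\partial x^{B}\rangle =\delta^{A}_{B}$, which must reproduce the Kronecker identities in both coordinate systems — a quick way to catch any sign or index slip.
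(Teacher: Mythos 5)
Your proposal is correct and matches the paper's (implicit) argument: the proposition is just the chain rule applied to the inverse of (\ref{schp}), with the only computation being the inversion $p_{i}^{1}=\dfrac{\partial \tilde{x}^{j}}{\partial x^{i}}\dfrac{dt}{d\tilde{t}}\tilde{p}_{j}^{1}$, which you carry out correctly and which yields the coefficient of $d\tilde{p}_{j}^{1}$ exactly as in (\ref{schfh}). Nothing further is needed, since the paper likewise leaves the $d\tilde{t}$ and $d\tilde{x}^{j}$ coefficients unexpanded as $\partial p_{i}^{1}/\partial \tilde{t}$ and $\partial p_{i}^{1}/\partial \tilde{x}^{j}$.
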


\section{Time-dependent semisprays of momenta}

As in the book \cite{Miron-Hr-Shim-Sab}, a central role in our dual jet
geometrical study is played by \textit{d-tensors}.

\begin{definition}
A geometrical object $T=\left( T_{1j(1)(l)\ldots }^{1i(k)(1)\ldots
}(t,x^{r},p_{r}^{1})\right) $ on the dual $1$-jet space ${J^{1\ast }(\mathbb{%
R},M)}$, whose local components change according to the rules%
\begin{equation*}
T_{1j(1)(l)\ldots }^{1i(k)(1)\ldots }=\tilde{T}_{1q(1)(s)\ldots
}^{1p(r)(1)\ldots }{\frac{dt}{d\tilde{t}}}{\frac{\partial x^{i}}{\partial 
\tilde{x}^{p}}}\left( {\frac{\partial x^{k}}{\partial \tilde{x}^{r}}}{\frac{d%
\tilde{t}}{dt}}\right) {\frac{d\tilde{t}}{dt}}{\frac{\partial \tilde{x}^{q}}{%
\partial x^{j}}}\left( {\frac{\partial \tilde{x}^{s}}{\partial x^{l}}}{\frac{%
dt}{d\tilde{t}}}\right) \ldots \;
\end{equation*}%
with respect to a transformation of coordinates (\ref{schp}) on ${J^{1\ast }(%
\mathbb{R},M)}$, is called a \textbf{d-tensor} or a \textbf{distinguished
tensor field} on ${J^{1\ast }(\mathbb{R},M)}$.
\end{definition}

\begin{remark}
The placing between parentheses of certain indices of the local components $%
T_{1j(1)(l)\ldots }^{1i(k)(1)\ldots }$ is necessary for clearer future
contractions.
\end{remark}

\begin{example}
If $H:{J^{1\ast }(\mathbb{R},M)}\rightarrow \mathbb{R}$ is a Hamiltonian
function depending on the momenta $p_{i}^{1}$ then the local components%
\begin{equation*}
G_{(1)(1)}^{(i)(j)}={\frac{1}{2}}{\frac{\partial ^{2}H}{\partial
p_{i}^{1}\partial p_{j}^{1}}}
\end{equation*}%
represent a d-tensor field $\mathbb{G}=\left( G_{(1)(1)}^{(i)(j)}\right) $
which is called the \textbf{vertical fundamental\ metrical d-tensor}
produced by $H.$
\end{example}

\begin{example}
The distinguished tensor $\mathbb{C}=\left( \mathbb{C}_{(i)}^{(1)}\right) ,$%
where $\mathbb{C}_{(i)}^{(1)}=p_{i}^{1},$ is called the \textbf{%
Liou\-ville-Ha\-mil\-ton d-tensor field of momenta} on the dual $1$-jet
space ${J^{1\ast }(\mathbb{R},M)}$.
\end{example}

\begin{example}
If $h_{11}(t)$ is a semi-Riemannian metric on $\mathbb{R}$, then the
geometrical object $\mathbb{L}=\left( L_{(j)11}^{(1)}\right) ,$ where $%
L_{(j)11}^{(1)}=h_{11}p_{j}^{1},$ is called the \textbf{momentum
Liou\-ville-Ha\-mil\-ton d-tensor associated with the metric }$h_{11}(t)$.
\end{example}

\begin{example}
Using the preceding metric $h_{11}(t)$, the distinguished tensor $\mathbb{J}%
=\left( J_{(1)1j}^{(i)}\right),$ where $J_{(1)1j}^{(i)}=h_{11}\delta _{j}^{i}
$, is called the \textbf{d-tensor of }$h$\textbf{-normalization } on the
dual 1-jet space ${J^{1\ast }(\mathbb{R},M)}$.
\end{example}

It is obvious that any d-tensor on ${J^{1\ast }(\mathbb{R},M)}$ is a tensor
field on ${J^{1\ast }(\mathbb{R},M)}$. Conversely, the opposite is not true.
As examples, we construct two tensors on ${J^{1\ast }(\mathbb{R},M)}$, which
are not d-tensors on ${J^{1\ast }(\mathbb{R},M)}$.

\begin{definition}
A global tensor $\underset{1}{G}$ on ${J^{1\ast }(\mathbb{R},M)},$ locally
expressed by%
\begin{equation*}
\underset{1}{G}=p_{i}^{1}dx^{i}\otimes {\frac{\partial }{\partial t}}-2%
\underset{1}{G}\text{{}}_{(j)i}^{(1)}dx^{i}\otimes {\frac{\partial }{%
\partial p_{j}^{1}}},
\end{equation*}%
is called a \textbf{temporal semispray} on the dual $1$-jet space ${J^{1\ast
}(\mathbb{R},M)}$.
\end{definition}

Taking into account that the temporal semispray $\underset{1}{G}$ is a
global tensor on ${J^{1\ast }(\mathbb{R},M)}$, by a direct calculation, we
obtain

\begin{proposition}
\emph{(i)} Under a transformation of coordinates (\ref{schp}) the local
components $\underset{1}{G}${}$_{(j)i}^{(1)}$ of the global tensor $\underset%
{1}{G}$ change according to the rules%
\begin{equation}
2\underset{1}{\widetilde{G}}\text{{}}_{(k)r}^{(1)}=2\underset{1}{G}\text{{}}%
_{(j)i}^{(1)}{\frac{d\tilde{t}}{dt}}{\frac{\partial x^{i}}{\partial \tilde{x}%
^{r}}}{\frac{\partial x^{j}}{\partial \tilde{x}^{k}}}-{\frac{\partial x^{i}}{%
\partial \tilde{x}^{r}}}{\frac{\partial \tilde{p}_{k}^{1}}{\partial t}}%
p_{i}^{1}.  \label{tsprh}
\end{equation}

\emph{(ii)} Conversely, to give a temporal semispray on ${J^{1\ast }(\mathbb{%
R},M)}$ is equivalent to give a set of local functions $\underset{1}{G}%
=\left( \underset{1}{G}{}_{(j)i}^{(1)}\right) $ which transform by the rules
(\ref{tsprh}).
\end{proposition}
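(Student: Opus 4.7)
The plan is to exploit the fact that $\underset{1}{G}$ is assumed to be a \emph{globally} defined tensor field: its local expression $p_i^1\,dx^i\otimes \partial/\partial t - 2\underset{1}{G}{}_{(j)i}^{(1)}\,dx^i\otimes \partial/\partial p_j^1$ must coincide on overlaps with the analogous expression $\tilde{p}_r^1\,d\tilde{x}^r\otimes \partial/\partial \tilde{t} - 2\underset{1}{\widetilde{G}}{}_{(k)r}^{(1)}\,d\tilde{x}^r\otimes \partial/\partial \tilde{p}_k^1$. My plan is to substitute the transformation formulas (\ref{schch}) for the basis vectors $\partial/\partial t$ and $\partial/\partial p_j^1$, together with $dx^i = (\partial x^i/\partial\tilde{x}^r)\,d\tilde{x}^r$ from (\ref{schfh}), into the first expression and then match coefficients against the second in the two independent bases $d\tilde{x}^r\otimes \partial/\partial\tilde{t}$ and $d\tilde{x}^r\otimes \partial/\partial\tilde{p}_k^1$.

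First I would treat the temporal piece $p_i^1\,dx^i\otimes \partial/\partial t$. After substitution, the horizontal part $(d\tilde{t}/dt)\,\partial/\partial\tilde{t}$ of $\partial/\partial t$, combined with the transformation rule for $p_i^1$ in (\ref{schp}), reproduces exactly $\tilde{p}_r^1\,d\tilde{x}^r\otimes \partial/\partial \tilde{t}$, which is the purely temporal part of the target expression. However, the \emph{vertical} contribution $(\partial\tilde{p}_j^1/\partial t)\,\partial/\partial\tilde{p}_j^1$ to $\partial/\partial t$ produces a spurious extra term $p_i^1(\partial x^i/\partial\tilde{x}^r)(\partial\tilde{p}_k^1/\partial t)\,d\tilde{x}^r\otimes \partial/\partial\tilde{p}_k^1$. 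The second piece $-2\underset{1}{G}{}_{(j)i}^{(1)}\,dx^i\otimes \partial/\partial p_j^1$ transforms purely within the vertical sector, yielding $-2\underset{1}{G}{}_{(j)i}^{(1)}(\partial x^i/\partial\tilde{x}^r)(\partial x^j/\partial\tilde{x}^k)(d\tilde{t}/dt)\,d\tilde{x}^r\otimes \partial/\partial\tilde{p}_k^1$. Summing the two vertical contributions, equating with $-2\underset{1}{\widetilde{G}}{}_{(k)r}^{(1)}$, and changing signs produces exactly the inhomogeneous rule (\ref{tsprh}).

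For the converse statement (ii), I would run a standard gluing argument: given a family of local functions $\underset{1}{G}{}_{(j)i}^{(1)}$ satisfying (\ref{tsprh}) on every chart overlap, define on each coordinate chart the local tensor $p_i^1\,dx^i\otimes \partial/\partial t - 2\underset{1}{G}{}_{(j)i}^{(1)}\,dx^i\otimes \partial/\partial p_j^1$. The computation of part (i) read in reverse shows that on overlaps these local expressions coincide, so together they define a single global tensor field on $J^{1\ast}(\mathbb{R},M)$ of the required temporal-semispray form.

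The main obstacle I anticipate is not conceptual but purely bookkeeping: tracking the free and dummy indices across several simultaneous substitutions, and carefully handling the non-tensorial factor $\partial\tilde{p}_j^1/\partial t$ coming from (\ref{schch}). Only its existence, not an explicit formula in terms of second derivatives of the coordinate change, is needed, since it is precisely this factor that accounts for the inhomogeneous correction $-(\partial x^i/\partial\tilde{x}^r)(\partial\tilde{p}_k^1/\partial t)p_i^1$ on the right-hand side of (\ref{tsprh}).
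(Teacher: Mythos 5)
Your proposal is correct and follows essentially the same route as the paper, which obtains (\ref{tsprh}) precisely "by a direct calculation" from the globality of the tensor $\underset{1}{G}$, i.e.\ by rewriting $p_{i}^{1}dx^{i}\otimes \partial /\partial t-2\underset{1}{G}{}_{(j)i}^{(1)}dx^{i}\otimes \partial /\partial p_{j}^{1}$ via (\ref{schp}), (\ref{schch}), (\ref{schfh}) and matching coefficients in the tilde basis, with the vertical part of $\partial /\partial t$ supplying the inhomogeneous term. Your gluing argument for part (ii) is likewise the intended reading of the converse, so nothing is missing.
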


\begin{example}
If $H_{11}^{1}(t)=(h^{11}/2)(dh_{11}/dt)$ is the Christoffel symbol of a
semi-Riemannian me\-tric $h_{11}(t)$ of the temporal manifold $\mathbb{R}$,
then the local components 
\begin{equation}
\underset{1}{\overset{0}{G}}\text{{}}_{(j)k}^{(1)}=\frac{1}{2}%
H_{11}^{1}p_{j}^{1}p_{k}^{1}  \label{temp-semispray-assoc-metric}
\end{equation}%
represent a temporal semispray $\underset{1}{\overset{0}{G}}$ on the dual $1$%
-jet space ${J^{1\ast }(\mathbb{R},M)}$, which is called the \textbf{%
canonical temporal semispray associated with the metric }$h_{11}(t)$.
\end{example}

A second example of tensor on the dual 1-jet space ${J^{1\ast }(\mathbb{R},M)%
},$ which is not a distinguished tensor, is given by

\begin{definition}
A global tensor $\underset{2}{G}$ on ${J^{1\ast }(\mathbb{R},M)},$ locally
expressed by%
\begin{equation*}
\underset{2}{G}=\delta _{i}^{j}dx^{i}\otimes {\frac{\partial }{\partial x^{j}%
}}-2\underset{2}{G}{}_{(j)i}^{(1)}dx^{i}\otimes {\frac{\partial }{\partial
p_{j}^{1}},}
\end{equation*}%
is called a \textbf{spatial semispray} on the dual $1$-jet space ${J^{1\ast
}(\mathbb{R},M)}$.
\end{definition}

Like in the case of a temporal semispray, we can prove without difficulties
the following statement:

\begin{proposition}
To give a spatial semispray on ${J^{1\ast }(\mathbb{R},M)}$ is equivalent to
give a set of local functions $\underset{2}{G}=\left( \underset{2}{G}\text{{}%
}_{(j)i}^{(1)}\right) $ which transform by the rules%
\begin{equation}
2\underset{2}{\widetilde{G}}\text{{}}_{(s)k}^{(1)}=2\underset{2}{G}\text{{}}%
_{(j)i}^{(1)}{\frac{d\tilde{t}}{dt}}{\frac{\partial x^{i}}{\partial \tilde{x}%
^{k}}}{\frac{\partial x^{j}}{\partial \tilde{x}^{s}}}-{\frac{\partial x^{i}}{%
\partial \tilde{x}^{k}}}{\frac{\partial \tilde{p}_{s}^{1}}{\partial x^{i}}}.
\label{ssprh}
\end{equation}
\end{proposition}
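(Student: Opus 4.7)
The plan is to mimic the strategy used for the temporal semispray (Proposition on the transformation of $\underset{1}{G}{}_{(j)i}^{(1)}$) but carry it out on the spatial part. The starting point is the globality assumption: the expression
\begin{equation*}
\underset{2}{G}=\delta _{i}^{j}\,dx^{i}\otimes \frac{\partial }{\partial x^{j}}-2\underset{2}{G}{}_{(j)i}^{(1)}\,dx^{i}\otimes \frac{\partial }{\partial p_{j}^{1}}
\end{equation*}
must agree, on the overlap of two charts, with its tilde-version written in $(\tilde{t},\tilde{x}^{i},\tilde{p}_{i}^{1})$. So the first step is to rewrite the two summands in the tilde coordinates using the cobasis transformation (\ref{schfh}) for $dx^{i}$ and the basis transformations (\ref{schch}) for $\partial /\partial x^{j}$ and $\partial /\partial p_{j}^{1}$, then equate the result term by term with the tilde-expression of $\underset{2}{G}$.

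The key observation is that $dx^{i}\otimes \partial /\partial x^{i}$ is \emph{not} itself a global tensor on ${J^{1\ast }(\mathbb{R},M)}$: when we substitute $\partial /\partial x^{i}={\partial \tilde{x}^{j}}/{\partial x^{i}}\cdot \partial /\partial \tilde{x}^{j}+{\partial \tilde{p}_{j}^{1}}/{\partial x^{i}}\cdot \partial /\partial \tilde{p}_{j}^{1}$, the horizontal part collapses (via ${\partial x^{i}}/{\partial \tilde{x}^{k}}\cdot {\partial \tilde{x}^{j}}/{\partial x^{i}}=\delta _{k}^{j}$) to $d\tilde{x}^{k}\otimes \partial /\partial \tilde{x}^{k}$, as desired; but the vertical piece produces a spurious summand of the form $({\partial x^{i}}/{\partial \tilde{x}^{k}})({\partial \tilde{p}_{s}^{1}}/{\partial x^{i}})\,d\tilde{x}^{k}\otimes \partial /\partial \tilde{p}_{s}^{1}$. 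This spurious term is exactly what the nonlinear second summand $-2\underset{2}{G}{}_{(j)i}^{(1)}\,dx^{i}\otimes \partial /\partial p_{j}^{1}$ must absorb: using $dx^{i}=({\partial x^{i}}/{\partial \tilde{x}^{k}})d\tilde{x}^{k}$ and $\partial /\partial p_{j}^{1}=({\partial x^{j}}/{\partial \tilde{x}^{s}})({d\tilde{t}}/{dt})\,\partial /\partial \tilde{p}_{s}^{1}$, this second summand rewrites as $-2\underset{2}{G}{}_{(j)i}^{(1)}({d\tilde{t}}/{dt})({\partial x^{i}}/{\partial \tilde{x}^{k}})({\partial x^{j}}/{\partial \tilde{x}^{s}})\,d\tilde{x}^{k}\otimes \partial /\partial \tilde{p}_{s}^{1}$. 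Matching the coefficient of $d\tilde{x}^{k}\otimes \partial /\partial \tilde{p}_{s}^{1}$ with $-2\underset{2}{\widetilde{G}}{}_{(s)k}^{(1)}$ and solving for $\underset{2}{\widetilde{G}}{}_{(s)k}^{(1)}$ yields precisely (\ref{ssprh}).

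For the converse direction, suppose one is given, in every chart, a family of local functions $\underset{2}{G}{}_{(j)i}^{(1)}$ whose transition rule is (\ref{ssprh}). Define $\underset{2}{G}$ in each chart by the stated local formula. The computation of the previous paragraph, read in reverse, shows that the chart-wise expressions agree on overlaps, hence they glue to a single global $(1,1)$-tensor on ${J^{1\ast }(\mathbb{R},M)}$ of the prescribed shape, i.e.\ to a spatial semispray.

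The only genuine obstacle is pure bookkeeping: keeping the four Jacobian-type factors (${d\tilde{t}}/{dt}$, ${\partial x^{i}}/{\partial \tilde{x}^{k}}$, ${\partial \tilde{x}^{j}}/{\partial x^{i}}$ and the mixed ${\partial \tilde{p}_{s}^{1}}/{\partial x^{i}}$ from (\ref{schch})) correctly placed on the right indices, and identifying which contributions come from the "horizontal" Kronecker term and which from the explicit semispray coefficients. No new idea beyond that used for the temporal semispray is needed; in fact, the two proofs are completely parallel, with $\partial /\partial t$ replaced by $\partial /\partial x^{i}$ and with the anomalous term ${\partial \tilde{p}_{k}^{1}}/{\partial t}\cdot p_{i}^{1}$ in (\ref{tsprh}) replaced by ${\partial \tilde{p}_{s}^{1}}/{\partial x^{i}}$ in (\ref{ssprh}).
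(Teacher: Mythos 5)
Your proof is correct and follows exactly the route the paper intends: the paper itself omits the computation, saying only that (like the temporal case) the rule follows ``by direct calculation'' from the globality of the tensor $\underset{2}{G}$, and your calculation — transforming $\delta_{i}^{j}\,dx^{i}\otimes \partial /\partial x^{j}$ and $-2\underset{2}{G}{}_{(j)i}^{(1)}\,dx^{i}\otimes \partial /\partial p_{j}^{1}$ via (\ref{schch})--(\ref{schfh}) and matching the coefficient of $d\tilde{x}^{k}\otimes \partial /\partial \tilde{p}_{s}^{1}$ — is precisely that calculation, with the spurious vertical term of the Kronecker part correctly identified as the source of the non-tensorial summand in (\ref{ssprh}). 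The converse (gluing chart-wise data satisfying (\ref{ssprh}) into a global tensor) is also handled correctly by reversing the same computation.
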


\begin{example}
If $\gamma _{jk}^{i}(x)$ are the Christoffel symbols of a semi-Riemannian
me\-tric $\varphi _{ij}(x)$ of the spatial manifold $M$, then the local
components 
\begin{equation}
\underset{2}{\overset{0}{G}}\text{{}}_{(j)k}^{(1)}=-\frac{1}{2}\gamma
_{jk}^{i}p_{i}^{1}  \label{spatial-semispr-assoc-metric}
\end{equation}%
define a spatial semispray $\underset{2}{\overset{0}{G}}$ on the dual $1$%
-jet space ${J^{1\ast }(\mathbb{R},M)}$, which is called the \textbf{%
canonical spatial semispray associated with the metric }$\varphi _{ij}(x)$.
\end{example}

\begin{definition}
A pair $G=\left( \underset{1}{G},\underset{2}{G}\right) $, consisting of a
temporal semispray $\underset{1}{G}$ and a spatial semispray $\underset{2}{G}
$, is called a \textbf{time-dependent semispray of momenta} on the dual $1$%
-jet space ${J^{1\ast }(\mathbb{R},M)}$.
\end{definition}

\section{Nonlinear connections and adapted bases}

In what follows, we study the important geometrical concept of \textit{%
nonlinear connection} on the dual $1$-jet space ${J^{1\ast }(\mathbb{R},M)}$%
, which is intimately related by the concept of time-dependent semispray.

\begin{definition}
A pair of local functions $N=\left( \underset{1}{N}\text{{}}_{(k)1}^{(1)},%
\underset{2}{N}\text{{}}_{(k)i}^{(1)}\right) $ on ${J^{1\ast }(\mathbb{R},M)}%
,$ which transform by the rules%
\begin{equation}
\begin{array}{l}
\underset{1}{\widetilde{N}}\text{{}}{_{(j)1}^{(1)}=}\underset{1}{N}\text{{}}%
_{(k)1}^{(1)}{{\dfrac{\partial x^{k}}{\partial \tilde{x}^{j}}}-\dfrac{dt}{d%
\tilde{t}}{\dfrac{\partial \tilde{p}_{j}^{1}}{\partial t}}},\medskip \\ 
\underset{2}{\widetilde{N}}\text{{}}{_{(j)r}^{(1)}=\underset{2}{N}%
{}_{(k)i}^{(1)}{\dfrac{d\tilde{t}}{dt}}{\dfrac{\partial x^{k}}{\partial 
\tilde{x}^{j}}}}\dfrac{\partial x^{i}}{\partial \tilde{x}^{r}}{-\dfrac{%
\partial x^{i}}{\partial \tilde{x}^{r}}{\dfrac{\partial \tilde{p}_{j}^{1}}{%
\partial x^{i}}}},%
\end{array}
\label{schcoh}
\end{equation}%
is called a \textbf{nonlinear connection }on the dual $1$-jet bundle ${%
J^{1\ast }(\mathbb{R},M)}$. The geo\-me\-tri\-cal entity$\ \underset{1}{N}%
=\left( \underset{1}{N}\underset{}{\overset{\left( 1\right) }{_{\left(
j\right) 1}}}\right) \ $(respectively $\underset{2}{N}=\left( \underset{2}{N}%
\underset{}{\overset{\left( 1\right) }{_{\left( j\right) i}}}\right) $) is
called a \textbf{temporal} (respectively \textbf{spatial}) \textbf{nonlinear
connection }on ${J^{1\ast }(\mathbb{R},M)}.$
\end{definition}

Now, let us expose the connection between the time-dependent semisprays of
momenta and nonlinear connections on the dual 1-jet space ${J^{1\ast }(%
\mathbb{R},M)}$. For this, let us consider that $\varphi _{ij}(x)$ is a
semi-Riemannian metric on the spatial manifold $M$. Thus, using the
transformation rules (\ref{tsprh}), (\ref{ssprh}) and (\ref{schcoh}) of the
geometrical objects taken in study, we can easily prove the following
statements:

\begin{proposition}
\emph{(i)} The connection between the temporal semisprays $\underset{1}{G}%
=\left( \underset{1}{G}{}_{(j)k}^{(1)}\right) $ and the temporal components
of nonlinear connections $N_{\text{temporal}}=\left( \underset{1}{N}\text{{}}%
_{(r)1}^{(1)}\right) $ is given by the relations%
\begin{equation*}
\underset{1}{N}\text{{}}_{(r)1}^{(1)}=\varphi ^{jk}{\frac{\partial \underset{%
1}{G}{}_{(j)k}^{(1)}}{\partial p_{i}^{1}}}\varphi _{ir},\qquad\underset{1}{G}%
\text{{}}_{(i)j}^{(1)}=\frac{1}{2}\underset{1}{N}{}_{(i)1}^{(1)}p_{j}^{1}.
\end{equation*}

\emph{(ii)} The connection between spatial semisprays $\underset{2}{G}%
=\left( \underset{2}{G}\text{{}}_{(j)i}^{(1)}\right) $ and the spatial
components of nonlinear connections $N_{\text{spatial}}=\left( \underset{2}{N%
}\text{{}}_{(j)i}^{(1)}\right) $ is given via the relations%
\begin{equation*}
\underset{2}{N}\text{{}}_{(j)i}^{(1)}=2\underset{2}{G}\text{{}}%
_{(j)i}^{(1)},\qquad\underset{2}{G}\text{{}}_{(j)i}^{(1)}{={\frac{1}{2}}}%
\underset{2}{N}\text{{}}_{(j)i}^{(1)}.
\end{equation*}
\end{proposition}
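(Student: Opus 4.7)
The plan is to verify each formula directly from the transformation laws (\ref{tsprh}), (\ref{ssprh}), (\ref{schcoh}) and (\ref{schch}), supplemented by the tensorial rule $\tilde{\varphi}^{kr}=\varphi^{ab}\frac{\partial \tilde{x}^k}{\partial x^a}\frac{\partial \tilde{x}^r}{\partial x^b}$ for the spatial metric. Part (ii) requires almost no work: under the substitution $\underset{2}{N}{}_{(j)i}^{(1)}:=2\underset{2}{G}{}_{(j)i}^{(1)}$, the transformation law (\ref{ssprh}) becomes literally the second line of (\ref{schcoh}), so the correspondence $\underset{2}{N}\leftrightarrow \underset{2}{G}$ is a bijection on local function systems and both formulas read off at once.

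For part (i) I treat the two formulas separately. To show that $\underset{1}{G}{}_{(i)j}^{(1)}=\frac{1}{2}\underset{1}{N}{}_{(i)1}^{(1)}p_j^1$ defines a temporal semispray out of a temporal nonlinear connection, I substitute the first line of (\ref{schcoh}) together with the momentum rule from (\ref{schp}) into the right-hand side and collect. The product of the two homogeneous pieces recovers the first term of (\ref{tsprh}), while the cross term $-\frac{dt}{d\tilde{t}}\frac{\partial \tilde{p}_k^1}{\partial t}\cdot\frac{\partial x^j}{\partial \tilde{x}^r}\frac{d\tilde{t}}{dt}p_j^1$ collapses to the inhomogeneous tail of (\ref{tsprh}) after the cancellation $\frac{dt}{d\tilde{t}}\cdot\frac{d\tilde{t}}{dt}=1$.

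The converse $\underset{1}{N}{}_{(r)1}^{(1)}=\varphi^{jk}\frac{\partial \underset{1}{G}{}_{(j)k}^{(1)}}{\partial p_i^1}\varphi_{ir}$ is the substantive step. I would differentiate (\ref{tsprh}) with respect to $\tilde{p}_l^1$, translating through $\frac{\partial}{\partial \tilde{p}_l^1}=\frac{\partial \tilde{x}^l}{\partial x^m}\frac{dt}{d\tilde{t}}\frac{\partial}{\partial p_m^1}$ read off from (\ref{schch}), and then contract with $\tilde{\varphi}^{kr}\tilde{\varphi}_{ls}$, replacing the transformed metrics by their pullbacks. Since the Jacobian $\frac{\partial x^j}{\partial \tilde{x}^k}$ does not depend on $t$, the inhomogeneous term of (\ref{tsprh}) first rewrites as the quadratic $-\frac{\partial x^i}{\partial \tilde{x}^r}\frac{\partial x^j}{\partial \tilde{x}^k}\frac{d^2\tilde{t}}{dt^2}p_j^1 p_i^1$, whose $\tilde{p}^1$-derivative generates two symmetric cross terms. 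Under the Jacobian cancellations $\frac{\partial \tilde{x}^k}{\partial x^a}\frac{\partial x^j}{\partial \tilde{x}^k}=\delta^j_a$ the homogeneous contribution reduces to $\underset{1}{N}{}_{(d)1}^{(1)}\frac{\partial x^d}{\partial \tilde{x}^s}$, exactly the first piece of (\ref{schcoh}).

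The main obstacle is the inhomogeneous contribution: after the metric contraction the two symmetric quadratic pieces in $p^1$ must combine, together with the factor $\frac{1}{2}$ coming from dividing (\ref{tsprh}) by $2$, to produce the single term $-\frac{dt}{d\tilde{t}}\frac{\partial \tilde{p}_s^1}{\partial t}$ demanded by (\ref{schcoh}). Keeping the index bookkeeping, the metric pullback factors and the two time derivatives all aligned through these contractions is where the calculation is tightest.
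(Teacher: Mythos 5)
Your proposal is correct and follows exactly the route the paper intends: the paper gives no explicit proof, stating only that the claims follow "easily" from the transformation rules (\ref{tsprh}), (\ref{ssprh}) and (\ref{schcoh}), and your verification — part (ii) by direct identification of the two laws, part (i) by substitution of (\ref{schcoh}) with the momentum rule (\ref{schp}) in one direction, and by differentiating (\ref{tsprh}) via $\partial/\partial\tilde{p}_{l}^{1}=\frac{\partial\tilde{x}^{l}}{\partial x^{m}}\frac{dt}{d\tilde{t}}\frac{\partial}{\partial p_{m}^{1}}$ and contracting with $\tilde{\varphi}^{kr}\tilde{\varphi}_{ls}$ in the other — is precisely that computation, and the bookkeeping you flag (the two symmetric terms from $p_{j}^{1}p_{i}^{1}$ absorbing the factor $\frac{1}{2}$ and yielding $-\frac{dt}{d\tilde{t}}\frac{\partial\tilde{p}_{s}^{1}}{\partial t}$ since $\frac{\partial\tilde{p}_{s}^{1}}{\partial t}=\frac{\partial x^{d}}{\partial\tilde{x}^{s}}\frac{d^{2}\tilde{t}}{dt^{2}}p_{d}^{1}$) does close as you describe.
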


\begin{remark}
It is obvious that on the 1-jet space ${J^{1\ast }( \mathbb{R},M)}$ a
time-dependent semispray of momenta $G$ naturally induces a nonlinear
connection $N_{G}$ and vice-versa, a nonlinear connection $N$ induces a
time-dependent semispray $G_{N}$. The nonlinear connection $N_{G}$ is called
the \textbf{canonical nonlinear connection associated with the
time-dependent semispray of momenta} $G$ and \textbf{vice-versa}.
\end{remark}

\begin{example}
The canonical nonlinear connection $\overset{0}{N}=\left( \underset{1}{%
\overset{0}{N}}\text{{}}_{(i)1}^{(1)},\underset{2}{\overset{0}{N}}\text{{}}%
_{(i)j}^{(1)}\right) $ produced by the canonical time-dependent semispray of
momenta $\overset{0}{G}=\left( \underset{1}{\overset{0}{G}},\underset{2}{%
\overset{0}{G}}\right) $ has the local components%
\begin{equation}
\underset{1}{\overset{0}{N}}\text{{}}_{(i)1}^{(1)}=H_{11}^{1}p_{i}^{1},%
\qquad \underset{2}{\overset{0}{N}}\text{{}}_{(i)j}^{(1)}=-\gamma
_{ij}^{k}p_{k}^{1}.  \label{nlc-assoc-to-metric}
\end{equation}%
This nonlinear connection is called the \textbf{canonical nonlinear
connection on }${J^{1\ast }(\mathbb{R},M)}$\textbf{, associated with the
semi-Riemannian metrics }$h_{11}(t)$\textbf{\ and }$\varphi _{ij}(x).$
\end{example}

Taking into account the complicated transformation rules (\ref{schch}) and (%
\ref{schfh}), we need a \textit{horizontal distribution }on the dual 1-jet
space ${J^{1\ast }(\mathbb{R},M)}$, in order to construct some \textit{%
adapted bases of vector }and\textit{\ covector fields}, whose transformation
rules are simpler (tensorial ones, for instance).

In this direction, let $u^{\ast }=\left( t,x^{i},p_{i}^{1}\right) \in {%
J^{1\ast }(\mathbb{R},M)}$ be an arbitrary point and let us consider the
differential map%
\begin{equation*}
\pi ^{\ast }\ _{\ast ,u^{\ast }}:T_{u^{\ast }}{J^{1\ast }(\mathbb{R},M)}%
\rightarrow T_{\left( t,x\right) }\left( \mathbb{R}\times M\right)
\end{equation*}%
of the canonical projection%
\begin{equation*}
\pi ^{\ast }:{J^{1\ast }(\mathbb{R},M)}\rightarrow \mathbb{R}\times M,\text{
\ \ \ }\pi ^{\ast }\left( u^{\ast }\right) =\left( t,x\right) ,
\end{equation*}%
together with its vector subspace $W_{u^{\ast }}=Ker\pi ^{\ast }\ _{\ast
,u^{\ast }}\subset T_{u^{\ast }}{J^{1\ast }(\mathbb{R},M)}.$ Because the
differential map $\pi ^{\ast }\ _{\ast ,u^{\ast }}$ is a surjection, we find
that we have $\dim _{\mathbb{R}}W_{u^{\ast }}=n$ and, moreover, a basis in $%
W_{u^{\ast }\text{ }}$is determined by $\left\{ \left. \dfrac{\partial }{%
\partial p_{i}^{1}}\right\vert _{u^{\ast }}\right\} .$

So, the map $\mathcal{W}:u^{\ast }\in {J^{1\ast }(\mathbb{R},M)}\rightarrow
W_{u^{\ast }}\subset T_{u^{\ast }}{J^{1\ast }(\mathbb{R},M)}$ is a
differential distribution, which is called the \textit{vertical distribution}
on the dual 1-jet space ${J^{1\ast }(\mathbb{R},M)}.$

\begin{definition}
A differential distribution%
\begin{equation*}
\mathcal{H}:u^{\ast }\in {J^{1\ast }(\mathbb{R},M)}\rightarrow H_{u^{\ast
}}\subset T_{u^{\ast }}{J^{1\ast }(\mathbb{R},M)},
\end{equation*}%
which is supplementary to the vertical distribution $\mathcal{W},$ that is
we have%
\begin{equation*}
T_{u^{\ast }}{J^{1\ast }(\mathbb{R},M)}=H_{u^{\ast }}\oplus W_{u^{\ast },}%
\text{ }\forall \text{ }u^{\ast }\in {J^{1\ast }(\mathbb{R},M)},
\end{equation*}%
is called a \textbf{horizontal distribution} on the dual $1$-jet space ${%
J^{1\ast }(\mathbb{R},M)}$.
\end{definition}

The above definition implies that $\dim _{\mathbb{R}}H_{u^{\ast }}=n+1,$ $%
\forall $ $u^{\ast }\in {J^{1\ast }(\mathbb{R},M)}.$ Moreover, the Lie
algebra of the vector fields $\mathcal{X}\left( {J^{1\ast }(\mathbb{R},M)}%
\right) $ can be decomposed in the direct sum $\ \mathcal{X}\left( {J^{1\ast
}(\mathbb{R},M)}\right) =\mathcal{S}\left( \mathcal{H}\right) \oplus 
\mathcal{S}\left( \mathcal{W}\right) ,$ where $\mathcal{S}\left( \mathcal{H}%
\right) $ (respectively $\mathcal{S}\left( \mathcal{W}\right) $) is the set
of differentiable sections on $\mathcal{H}$ (respectively $\mathcal{W}$)$.$

Supposing that $\mathcal{H}$\ is a fixed horizontal distribution on ${%
J^{1\ast }(\mathbb{R},M)}$, we have the isomorphism%
\begin{equation*}
\left. \pi ^{\ast }\ _{\ast ,u^{\ast }}\right\vert _{H_{u^{\ast
}}}:H_{u^{\ast }}\rightarrow T_{\pi ^{\ast }\left( u^{\ast }\right) }\left( 
\mathbb{R}\times M\right) ,
\end{equation*}%
which allows us to prove the following result:

\begin{theorem}
\emph{(i)} There exist unique linear independent horizontal vector fields $%
\dfrac{\delta }{\delta t},$ $\dfrac{\delta }{\delta x^{i}}\in \mathcal{S}%
\left( \mathcal{H}\right) ,$ having the properties%
\begin{equation}
\pi ^{\ast }\ _{\ast }\left( \dfrac{\delta }{\delta t}\right) =\dfrac{%
\partial }{\partial t},\quad \pi ^{\ast }\ _{\ast }\left( \dfrac{\delta }{%
\delta x^{i}}\right) =\dfrac{\partial }{\partial x^{i}}.
\label{delta-t-si-x}
\end{equation}

\emph{(ii)} The horizontal vector fields $\dfrac{\delta }{\delta t}$ and $%
\dfrac{\delta }{\delta x^{i}}$ can be uniquely written in the form%
\begin{equation}
\dfrac{\delta }{\delta t}=\dfrac{\partial }{\partial t}-\underset{1}{N}%
\underset{}{\overset{\left( 1\right) }{_{\left( j\right) 1}}}\dfrac{\partial 
}{\partial p_{j}^{1}},\qquad \dfrac{\delta }{\delta x^{i}}=\dfrac{\partial }{%
\partial x^{i}}-\underset{2}{N}\underset{}{\overset{\left( 1\right) }{%
_{\left( j\right) i}}}\dfrac{\partial }{\partial p_{j}^{1}}.
\label{form-of-delta-t-si-x}
\end{equation}

\emph{(iii)} The local coefficients \ $\underset{1}{N}\underset{}{\overset{%
\left( 1\right) }{_{\left( j\right) 1}}}$\ and $\underset{2}{N}\underset{}{%
\overset{\left( 1\right) }{_{\left( j\right) i}}}$\ obey the rules (\ref%
{schcoh}) of a nonlinear connection $N$ on ${J^{1\ast }(\mathbb{R},M)}.$

\emph{(iv)} On the 1-jet space ${J^{1\ast }( \mathbb{R},M)}$ to give a
horizontal distribution $\mathcal{H}$ is equivalent to give a nonlinear
connection $N=\left( \underset{1}{N}\underset{}{\overset{\left( 1\right) }{%
_{\left( j\right) 1}}},\ \underset{2}{N}\underset{}{\overset{\left( 1\right) 
}{_{\left( j\right) i}}}\right) .$
\end{theorem}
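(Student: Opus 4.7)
The plan is to exploit the isomorphism $\left.\pi^{\ast}\ _{\ast,u^{\ast}}\right|_{H_{u^{\ast}}}\colon H_{u^{\ast}}\to T_{\pi^{\ast}(u^{\ast})}(\mathbb{R}\times M)$ already noted before the statement. For (i), I would define $\left.\delta/\delta t\right|_{u^{\ast}}$ and $\left.\delta/\delta x^{i}\right|_{u^{\ast}}$ to be the unique preimages of $\left.\partial/\partial t\right|_{(t,x)}$ and $\left.\partial/\partial x^{i}\right|_{(t,x)}$ under this isomorphism. Existence and uniqueness at each point are then immediate; smoothness follows because the horizontal distribution is smooth; and linear independence is inherited from that of $\partial/\partial t,\partial/\partial x^{i}$ via the injectivity of $\pi^{\ast}\ _{\ast,u^{\ast}}|_{H_{u^{\ast}}}$.

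For (ii), I would use the direct sum $T_{u^{\ast}}J^{1\ast}(\mathbb{R},M)=H_{u^{\ast}}\oplus W_{u^{\ast}}$ together with the fact that $\{\partial/\partial p_{j}^{1}\}$ is a basis of $W_{u^{\ast}}$ to write $\delta/\delta t = A\,\partial/\partial t+B^{i}\,\partial/\partial x^{i}-\underset{1}{N}{}_{(j)1}^{(1)}\,\partial/\partial p_{j}^{1}$ for some coefficients $A,B^{i},\underset{1}{N}{}_{(j)1}^{(1)}$. Applying $\pi^{\ast}\ _{\ast}$ and invoking (\ref{delta-t-si-x}) forces $A=1$ and $B^{i}=0$, which yields the first formula in (\ref{form-of-delta-t-si-x}); the treatment of $\delta/\delta x^{i}$ is completely analogous, producing $\underset{2}{N}{}_{(j)i}^{(1)}$.

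For (iii), since the horizontal lift is defined intrinsically, the uniqueness in (i) and the base identities $\partial/\partial t=(d\tilde{t}/dt)\partial/\partial\tilde{t}$ and $\partial/\partial x^{i}=(\partial\tilde{x}^{j}/\partial x^{i})\partial/\partial\tilde{x}^{j}$ force $\delta/\delta t=(d\tilde{t}/dt)\,\delta/\delta\tilde{t}$ and $\delta/\delta x^{i}=(\partial\tilde{x}^{j}/\partial x^{i})\,\delta/\delta\tilde{x}^{j}$. I would then substitute the expressions from (ii) on both sides and expand $\partial/\partial t$, $\partial/\partial x^{i}$, $\partial/\partial p_{j}^{1}$ via (\ref{schch}); matching the coefficients of $\partial/\partial\tilde{p}_{k}^{1}$ yields precisely (\ref{schcoh}). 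This computation is where the main obstacle lies: one must carefully isolate the non-tensorial terms $\partial\tilde{p}_{k}^{1}/\partial t$ and $\partial\tilde{p}_{k}^{1}/\partial x^{i}$ produced by (\ref{schch}) and verify that their sign and placement coincide with those prescribed in the definition of a nonlinear connection.

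For (iv), the converse direction proceeds by reading the same calculation backwards. Starting from any pair of local functions $\left(\underset{1}{N}{}_{(j)1}^{(1)},\underset{2}{N}{}_{(j)i}^{(1)}\right)$ obeying (\ref{schcoh}), I would declare $\delta/\delta t$ and $\delta/\delta x^{i}$ via (\ref{form-of-delta-t-si-x}); the computation of (iii), run in reverse, shows that these expressions glue into globally defined vector fields on $J^{1\ast}(\mathbb{R},M)$. Since at each $u^{\ast}$ the $n+1$ vectors $\delta/\delta t,\delta/\delta x^{i}$ together with $\{\partial/\partial p_{i}^{1}\}$ form a basis of $T_{u^{\ast}}J^{1\ast}(\mathbb{R},M)$, their pointwise span produces a distribution $\mathcal{H}$ supplementary to $\mathcal{W}$, thereby establishing the claimed one-to-one correspondence between horizontal distributions and nonlinear connections.
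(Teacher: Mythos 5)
Your proposal is correct and follows essentially the same route as the paper: existence and uniqueness via the isomorphism $\left. \pi ^{\ast }\ _{\ast }\right\vert _{H_{u^{\ast }}}$, expansion of $\dfrac{\delta }{\delta t}$ and $\dfrac{\delta }{\delta x^{i}}$ in the natural basis with the coefficients pinned down by (\ref{delta-t-si-x}), coefficient matching under (\ref{schch}) to obtain (\ref{schcoh}), and the converse construction $H_{u^{\ast }}=Span\left\{ \dfrac{\delta }{\delta t},\dfrac{\delta }{\delta x^{i}}\right\} $. If anything, your step (iii) is spelled out more explicitly than the paper's, which merely asserts the transformation rules follow from the global character of the adapted vector fields.
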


\begin{proof}
Let $\dfrac{\delta }{\delta t},$ $\dfrac{\delta }{\delta x^{i}}\in \mathcal{X%
}\left( {J^{1\ast }(\mathbb{R},M)}\right) $ be vector fields on ${J^{1\ast }(%
\mathbb{R},M)}$, locally expressed by%
\begin{equation*}
\begin{array}{l}
\dfrac{\delta }{\delta t}=A_{1}^{1}\dfrac{\partial }{\partial t}+A_{1}^{j}%
\dfrac{\partial }{\partial x^{j}}+A_{\left( j\right) 1}^{\left( 1\right) }%
\dfrac{\partial }{\partial p_{j}^{1}},\medskip \\ 
\dfrac{\delta }{\delta x^{i}}=X_{i}^{1}\dfrac{\partial }{\partial t}%
+X_{i}^{j}\dfrac{\partial }{\partial x^{j}}+X_{\left( j\right) i}^{\left(
1\right) }\dfrac{\partial }{\partial p_{j}^{1}},%
\end{array}%
\end{equation*}%
which verify the relations (\ref{delta-t-si-x}). Then, taking into account
the local expression of the map $\pi ^{\ast }{}_{\ast }$, we get%
\begin{eqnarray*}
A_{1}^{1} &=&1,\ A_{1}^{j}=0,\ A_{\left( j\right) 1}^{\left( 1\right) }=-%
\underset{1}{N}\underset{}{\overset{\left( 1\right) }{_{\left( j\right) 1}}},
\\
X_{i}^{1} &=&0,\ X_{i}^{j}=\delta _{i}^{j},\ X_{\left( j\right) i}^{\left(
1\right) }=-\underset{2}{N}\underset{}{\overset{\left( 1\right) }{_{\left(
j\right) i}}}.
\end{eqnarray*}%
These equalities prove the form (\ref{form-of-delta-t-si-x}) of the vector
fields from Theorem, together with their linear independence. The uniqueness
of the coefficients $\underset{1}{N}\underset{}{\overset{\left( 1\right) }{%
_{\left( j\right) 1}}}\ $and $\underset{2}{N}\underset{}{\overset{\left(
1\right) }{_{\left( j\right) i}}}$ is obvious.

Because the vector fields $\dfrac{\delta }{\delta t}$ and $\dfrac{\delta }{%
\delta x^{i}}$ are globally defined, we deduce that a change of coordinates (%
\ref{schp}) on ${J^{1\ast }(\mathbb{R},M)}$ produces a transformation of the
local coefficients $\underset{1}{N}\underset{}{\overset{\left( 1\right) }{%
_{\left( j\right) 1}}}$ and $\underset{2}{N}\underset{}{\overset{\left(
1\right) }{_{\left( j\right) i}}}$ by the rules (\ref{schcoh}).

Finally, starting with a set of functions $N=\left( \underset{1}{N}\underset{%
}{\overset{\left( 1\right) }{_{\left( j\right) 1}}},\ \underset{2}{N}%
\underset{}{\overset{\left( 1\right) }{_{\left( j\right) i}}}\right) ,$
which satisfy the rules (\ref{schcoh})$,$ we can construct the horizontal
distribution\ $\mathcal{H}$, taking%
\begin{equation*}
H_{u^{\ast }}=Span\left\{ \left. \frac{\delta }{\delta t}\right\vert
_{u^{\ast }},\left. \frac{\delta }{\delta x^{i}}\right\vert _{u^{\ast
}}\right\} .
\end{equation*}%
The decomposition $T_{u^{\ast }}{J^{1\ast }(\mathbb{R},M)}=H_{u^{\ast
}}\oplus W_{u^{\ast }}$ is obvious now.
\end{proof}

\begin{definition}
The set of the linear independent vector fields%
\begin{equation}
\left\{ \frac{\delta }{\delta t},\frac{\delta }{\delta x^{i}},\frac{\partial 
}{\partial p_{i}^{1}}\right\} \subset \mathcal{X}\left( {J^{1\ast }(\mathbb{R%
},M)}\right)  \label{ad-basis-nlc}
\end{equation}%
is called the \textbf{adapted basis of vector fields produced by the
nonlinear connection} $N=\left( \underset{1}{N},\underset{2}{N}\right) .$
\end{definition}

With respect to the coordinate transformations (\ref{schp}), the elements of
the adapted basis (\ref{ad-basis-nlc}) have their transformation laws as
tensorial ones (in contrast with the transformations rules (\ref{schch})):%
\begin{equation*}
\begin{array}{l}
\dfrac{\delta }{\delta t}=\dfrac{d\tilde{t}}{dt}\dfrac{\delta }{\delta 
\tilde{t}},\medskip \\ 
\dfrac{\delta }{\delta x^{i}}=\dfrac{\partial \tilde{x}^{j}}{\partial x^{i}}%
\dfrac{\delta }{\delta \tilde{x}^{j}},\medskip \\ 
\dfrac{\partial }{\partial p_{i}^{1}}=\dfrac{d\tilde{t}}{dt}\dfrac{\partial
x^{i}}{\partial \tilde{x}^{j}}\dfrac{\partial }{\partial \tilde{p}_{j}^{1}}.%
\end{array}%
\end{equation*}

The dual basis (of covector fields) of the adapted basis (\ref{ad-basis-nlc}%
) is given by%
\begin{equation}
\left\{ dt,dx^{i},\delta p_{i}^{1}\right\} \subset \mathcal{X}^{\ast }\left( 
{J^{1\ast }(\mathbb{R},M)}\right)  \label{ad-cobasis-nlc}
\end{equation}%
where%
\begin{equation*}
\delta p_{i}^{1}=dp_{i}^{1}+\underset{1}{N}\overset{(1)}{\underset{}{%
_{\left( i\right) 1}}}dt+\underset{2}{N}\overset{(1)}{\underset{}{_{\left(
i\right) j}}}dx^{j}.
\end{equation*}

\begin{definition}
The dual basis of covector fields (\ref{ad-cobasis-nlc}) is called the 
\textbf{adapted cobasis of covector fields of the nonlinear connection} $%
N=\left( \underset{1}{N},\underset{2}{N}\right) $.
\end{definition}

Moreover, with respect to transformation laws (\ref{schp}), we obtain the
following tensorial transformation rules:%
\begin{equation*}
\begin{array}{l}
dt=\dfrac{dt}{d\tilde{t}}d\tilde{t},\medskip \\ 
dx^{i}=\dfrac{\partial x^{i}}{\partial \tilde{x}^{j}}d\tilde{x}^{j},\medskip
\\ 
\delta p_{i}^{1}=\dfrac{dt}{d\tilde{t}}\dfrac{\partial \tilde{x}^{j}}{%
\partial x^{i}}\delta \tilde{p}_{j}^{1}.%
\end{array}%
\end{equation*}

As a consequence of the preceding assertions, we find the following simple
result:

\begin{proposition}
\emph{(i)} The Lie algebra of vector fields on ${J^{1\ast }(\mathbb{R},M)}$
decomposes in the direct sum $\mathcal{X}\left( {J^{1\ast }(\mathbb{R},M)}%
\right) =\mathcal{X}\left( \mathcal{H}_{\mathbb{R}}\right) \oplus \mathcal{X}%
\left( \mathcal{H}_{M}\right) \oplus \mathcal{X}\left( \mathcal{W}\right) ,$
where%
\begin{equation*}
\mathcal{X}\left( \mathcal{H}_{\mathbb{R}}\right) =Span\left\{ \frac{\delta 
}{\delta t}\right\} ,\text{ }\mathcal{X}\left( \mathcal{H}_{M}\right)
=Span\left\{ \frac{\delta }{\delta x^{i}}\right\} ,\text{ }\mathcal{X}\left( 
\mathcal{W}\right) =Span\left\{ \frac{\partial }{\partial p_{i}^{1}}\right\}
.
\end{equation*}

\emph{(ii)} The Lie algebra of covector fields on ${J^{1\ast }(\mathbb{R},M)}
$ decomposes in the direct sum $\mathcal{X}^{\ast }\left( {J^{1\ast }(%
\mathbb{R},M)}\right) =\mathcal{X}^{\ast }\left( \mathcal{H}_{\mathbb{R}%
}\right) \oplus \mathcal{X}^{\ast }\left( \mathcal{H}_{M}\right) \oplus 
\mathcal{X}^{\ast }\left( \mathcal{W}\right) ,$ where%
\begin{equation*}
\mathcal{X}^{\ast }\left( \mathcal{H}_{\mathbb{R}}\right) =Span\left\{
dt\right\} ,\text{ }\mathcal{X}^{\ast }\left( \mathcal{H}_{M}\right)
=Span\left\{ dx^{i}\right\} ,\text{ }\mathcal{X}^{\ast }\left( \mathcal{W}%
\right) =Span\left\{ \delta p_{i}^{1}\right\} .
\end{equation*}
\end{proposition}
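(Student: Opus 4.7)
The plan is to deduce both decompositions directly from the fact, established in the preceding Theorem, that the adapted basis $\{\delta/\delta t,\,\delta/\delta x^{i},\,\partial/\partial p_{i}^{1}\}$ is a pointwise basis of $T_{u^{\ast}}J^{1\ast}(\mathbb{R},M)$ at every $u^{\ast}$, together with its tensorial transformation rule. No extra analytic input is needed; this is essentially a bookkeeping argument about free modules over $\mathcal{C}^{\infty}(J^{1\ast}(\mathbb{R},M))$.

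For part (i), I would first observe that, since the adapted basis is a basis of vectors fiberwise and its elements are globally defined (by the Theorem), any $X\in\mathcal{X}(J^{1\ast}(\mathbb{R},M))$ admits the unique local representation
\begin{equation*}
X = X^{0}\frac{\delta}{\delta t} + X^{i}\frac{\delta}{\delta x^{i}} + X_{(i)}^{(1)}\frac{\partial}{\partial p_{i}^{1}},
\end{equation*}
with smooth coefficients; the tensorial transformation laws of the adapted basis guarantee that the three terms are globally defined vector fields belonging respectively to $\mathcal{X}(\mathcal{H}_{\mathbb{R}})$, $\mathcal{X}(\mathcal{H}_{M})$ and $\mathcal{X}(\mathcal{W})$. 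Existence of the decomposition thus follows by taking $X^{0}\delta/\delta t \in \mathcal{X}(\mathcal{H}_{\mathbb{R}})$, and analogously for the other summands. Uniqueness, i.e.\ that the sum is direct, is immediate: if $X^{0}\delta/\delta t + X^{i}\delta/\delta x^{i} + X_{(i)}^{(1)}\partial/\partial p_{i}^{1}=0$, then evaluating at any $u^{\ast}$ and using the pointwise linear independence of the adapted basis forces all the coefficients to vanish.

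Part (ii) is proved by the same pattern, replacing the adapted basis by its dual adapted cobasis $\{dt,\,dx^{i},\,\delta p_{i}^{1}\}$. Any covector field writes uniquely as $\omega = \omega_{0}\,dt + \omega_{i}\,dx^{i} + \omega^{(i)}_{(1)}\,\delta p_{i}^{1}$, with the three terms lying respectively in $\mathcal{X}^{\ast}(\mathcal{H}_{\mathbb{R}})$, $\mathcal{X}^{\ast}(\mathcal{H}_{M})$ and $\mathcal{X}^{\ast}(\mathcal{W})$; the tensorial transformation laws for $dt$, $dx^{i}$ and $\delta p_{i}^{1}$ show these components are invariantly defined, while the duality relations $\langle dt,\delta/\delta t\rangle=1$, $\langle dx^{i},\delta/\delta x^{j}\rangle=\delta^{i}_{j}$, $\langle\delta p_{i}^{1},\partial/\partial p_{j}^{1}\rangle=\delta^{j}_{i}$ (and the vanishing of all other pairings) give the pointwise linear independence needed for the directness of the sum.

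There is no serious obstacle here; the only thing worth flagging is ensuring that the local decomposition is genuinely global. That step rests entirely on the tensorial transformation rules for the adapted (co)basis exhibited right before the Proposition, so it reduces to citing those formulas rather than performing a new computation.
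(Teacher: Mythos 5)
Your argument is correct and follows exactly the route the paper intends: the paper gives no separate proof, presenting the Proposition as an immediate consequence of the adapted basis $\left\{ \delta/\delta t,\ \delta/\delta x^{i},\ \partial/\partial p_{i}^{1}\right\}$ and its dual cobasis together with their tensorial transformation rules, which is precisely what you use. Your explicit check of pointwise linear independence, the duality pairings, and the global well-definedness of the summands simply fills in the routine details the author left implicit.
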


\begin{definition}
The distributions $\mathcal{H}_{\mathbb{R}}$ and $\mathcal{H}_{M}$ are
called the $\mathbb{R}$\textbf{-horizontal distribution} and $M$\textbf{%
-horizontal distribution} on ${J^{1\ast }(\mathbb{R},M)}$.
\end{definition}

\section{Discussion}

The results of this paper represent the basics for a subsequent
geometrization (in the sense of nonlinear connection, canonical d-linear
connection, d-torsions and d-curvatures) on dual jet spaces of the
time-dependent Hamiltonians regarded as real-valued functions on the 1-jet
space ${J^{1\ast }(\mathbb{R},M)}$. This Hamilton geometrization is similar
with that developed on cotangent bundles (\cite{Miron-Hr-Shim-Sab}, \cite%
{Miron-Hamilton}, \cite{At} and \cite{Atan-Klepp}), but is characterized by
a "relativistic" time in the study. In contrast the time-dependent Hamilton
geometrization on cotangent bundles is characterized by an absolute time.

\noindent \textsc{Mircea Neagu} and \textsc{Alexandru Oan\u{a}}\newline
Transilvania University of Bra\c{s}ov\newline
Department of Mathematics and Informatics\newline
Blvd. Iuliu Maniu 50, 500091 Bra\c{s}ov, Romania.

\noindent E-mails: mircea.neagu@unitbv.ro, alexandru.oana@unitbv.ro


\begin{thebibliography}{9}
\bibitem{At} Gh. Atanasiu, \textit{The invariant expression of Hamilton
geometry}, Tensor N.S., vol. \textbf{47}, no. \textbf{3} (1988), 225-234.

\bibitem{Atan-Klepp} Gh. Atanasiu, F.C. Klepp, \textit{Nonlinear connections
in cotangent bundle}, Publ. Math. Debrecen, Hungary, vol. \textbf{39}, no. 
\textbf{1-2} (1991), 107-111.

\bibitem{Ata-Nea-Oana} Gh. Atanasiu, M. Neagu, A. Oan\u{a}, \textit{The
Geometry of Jet Multi-Time Lagrange and Hamilton Spaces. Applications in
Theoretical Physics}, Fair Partners, Bucharest, 2013.

\bibitem{Miron-Hamilton} R. Miron, \textit{Hamilton geometry}, An. \c{S}t.
\textquotedblright Al. I. Cuza\textquotedblright\ Univ., Ia\c{s}i, Romania,
vol. \textbf{35} (1989), 33-67.

\bibitem{Miron-Hr-Shim-Sab} R. Miron, D. Hrimiuc, H. Shimada, S.V. Sab\u{a}%
u, \textit{The Geometry of Hamilton and Lagrange Spaces}, Kluwer Academic
Publishers, Dordrecht, 2001.

\bibitem{Olv1} P.J. Olver, \textit{Applications of Lie Groups to
Differential Equations}, Springer-Verlag, New York, 1986.
\end{thebibliography}
\end{document}